\documentclass{amsart}
\usepackage{wrapfig}
\usepackage{graphicx} 
\usepackage{multicol} 
\usepackage{upgreek}
\usepackage{tikz}
\usepackage{mathrsfs}

\usepackage[latin1]{inputenc}
\usepackage[T1]{fontenc}
\usepackage{verbatim}
\usepackage{multirow}
\usepackage{geometry}
\usepackage{amssymb}
\usepackage{amsmath}
\usepackage{graphicx}
\usepackage{amsthm}
\usepackage{caption}
\usepackage{color}
\usepackage{enumerate}
\usepackage{graphicx}
\usepackage{float}
\usepackage{subfigure}

\newcommand{\eChar}{\begin{enumerate}[(i)]}
\newcommand{\eCharR}{\begin{enumerate}[(a)]}
\newcommand{\eBr}{\begin{enumerate}[(1)]}

\newcommand{\Abstract}

\title
{
Idleness Functions for Ollivier--Ricci Curvature on Hypergraphs
}

\author{Qing Xia}
\address{
School of Mathematical Sciences\\
University of Science and Technology of China\\
96 Jinzhai Road\\
Hefei 230026\\
Anhui Province\\
China}
\email{xq0420@mail.ustc.edu.cn}

\date{\today}
\theoremstyle{plain}
\newtheorem{lemma}{Lemma}[section]
\newtheorem{theorem}[lemma]{Theorem}

\newtheorem{corollary}[lemma]{Corollary}
\theoremstyle{definition}

\newtheorem{definition}[lemma]{Definition}

\newtheorem{example}[lemma]{Example}

\numberwithin{equation}{section}

\begin{document}

\pagestyle{plain}

\begin{abstract}Let $\mathcal H=(V,E)$ be a locally finite simple hypergraph, equip $V$ with the hyperpath metric, and consider the lazy random walk introduced for hypergraph Ollivier--Ricci curvature by Tian and Zhao \cite{TianZhao2025}. For adjacent vertices $x$ and $y$, we prove that the idleness function $\alpha\mapsto\kappa_\alpha^{\mathcal H}(x,y)$ is piecewise affine and with no more than three affine pieces. A separate mass-balance argument gives linearity on $[1/2,1]$ for every locally finite simple hypergraph and, consequently, a limit-free expression for the Lin--Lu--Yau curvature. In the $r$-uniform linear case, the hypergraph walk agrees exactly with the simple random walk on its 2-section. This reduction transfers the sharp endpoint intervals of Bourne, Cushing, Liu, M\"unch, and Peyerimhoff \cite{BourneEtAl2018}.
\end{abstract}
\keywords{Ollivier--Ricci curvature; Lin--Lu--Yau curvature; hypergraph; idleness function; Wasserstein distance}
\maketitle


\section{Introduction and statement of results}

Ollivier's coarse Ricci curvature measures the contraction of one-step probability distributions in the $1$-Wasserstein metric \cite{Ollivier2009}. Its graph-theoretic specialization has become a standard tool in discrete geometry, with the Lin--Lu--Yau curvature arising from the small-step, or equivalently high-idleness, regime \cite{LinLuYau2011}. A basic structural question is therefore how the curvature changes when the random walk is made increasingly lazy. For locally finite graphs, Bourne, Cushing, Liu, M\"unch, and Peyerimhoff proved that the corresponding idleness function is concave and piecewise linear with at most three pieces; they also determine two intervals of monotonicity, one contaaining $0$ and the other containing $1$ \cite{BourneEtAl2018}.

\begin{theorem}\cite{BourneEtAl2018}\label{theorem:piecewise linear property for graphs} Let $G=(V, E)$ be a locally finite graph. Let $x, y\in V$ with $x\sim y$. Then the function $\alpha\mapsto\kappa_{\alpha}(x, y)$ is concave and piecewise linear over $[0,1]$ with at most 3 linear parts. Furthermore $\kappa_{\alpha}(x, y)$ is linear on the intervals
$$\left[0,\frac{1}{\operatorname{lcm}\left(d_x, d_y\right)+1}\right]\text{ and }\left[\frac{1}{\max\left(d_x, d_y\right)+1}, 1\right].$$
Thus, if we have the further condition $d_{x}=d_{y}$, then $\kappa_{\alpha}(x, y)$ has at most two linear parts.
\end{theorem}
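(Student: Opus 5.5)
Our plan is to follow the Kantorovich-duality approach for graphs. For $\alpha\in[0,1]$ we write
$$m_x^\alpha=\alpha\delta_x+\tfrac{1-\alpha}{d_x}\sum_{z\sim x}\delta_z .$$
Because $x\sim y$, we have $\kappa_\alpha(x,y)=1-W_1(m_x^\alpha,m_y^\alpha)$. By Kantorovich duality,
$$W_1(m_x^\alpha,m_y^\alpha)=\sup_{f\in \mathrm{Lip}(1)}\sum_w f(w)\bigl(m_x^\alpha(w)-m_y^\alpha(w)\bigr).$$
The total variation of $m_x^\alpha-m_y^\alpha$ is supported on the ball $B_1(x)\cup B_1(y)$, which is finite. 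So we may restrict to $1$-Lipschitz $f$ on this finite set, normalized by $f(x)=0$. These form a compact polytope $P$ that does not depend on $\alpha$.

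The pairing is affine in $\alpha$ for each fixed $f$:
$$\alpha\bigl(f(x)-f(y)\bigr)+(1-\alpha)\Bigl(\tfrac{1}{d_x}\sum_{z\sim x} f(z)-\tfrac{1}{d_y}\sum_{z\sim y}f(z)\Bigr).$$
Its supremum over the polytope $P$ is a maximum over the finitely many vertices of $P$. Hence $W_1$ is convex and piecewise affine in $\alpha$, and $\kappa_\alpha$ is concave and piecewise linear. This gives concavity and piecewise linearity, but not the bound of three pieces.

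To bound the number of pieces, we split into the two monotonicity intervals and show $\kappa_\alpha$ is linear on each. On $[1/(\max(d_x,d_y)+1),1]$ we construct a single transport plan and a single $1$-Lipschitz function, both of which depend affinely on $\alpha$, and check that their costs agree. Assume $d_x\ge d_y$. The plan moves the excess mass at $x$ to $y$ along the edge. This needs $\alpha\ge (1-\alpha)/d_x$, i.e. $\alpha\ge 1/(d_x+1)$, which is the left endpoint. The remaining neighbor-to-neighbor mass is matched by an optimal plan for the $\alpha$-independent part. Optimality comes from a function $f$ that is optimal at $\alpha=1/(d_x+1)$ and also at $\alpha=1$, where $W_1=1$ via $f(y)-f(x)=1$. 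We then show $\kappa_\alpha$ is linear on $[1/(\max+1),1]$ by checking the primal and dual values coincide at both endpoints. Since the certificates are affine, complementary slackness persists in between.

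On $[0,1/(\operatorname{lcm}(d_x,d_y)+1)]$ the idea is similar. The masses $(1-\alpha)/d_x$ and $(1-\alpha)/d_y$ have a combinatorial structure that is unchanged for small $\alpha$. An optimal dual vertex $f$ at $\alpha=0$ can be chosen integer-valued, and it stays optimal as long as the small extra mass $\alpha$ at $x$ and $y$ can be rerouted without changing the support pattern. The threshold $1/(\operatorname{lcm}+1)$ comes from the granularity of the masses: all masses are multiples of $(1-\alpha)/\operatorname{lcm}$. We would argue that the value of any nonoptimal dual vertex differs from the optimum by at least $(1-\alpha)/\operatorname{lcm}$ at $\alpha=0$, while the slopes are bounded by $1$ plus a similar term. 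So no breakpoint can occur before $\alpha=1/(\operatorname{lcm}+1)$.

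Combining the two: a concave piecewise linear function that is linear on an initial interval and on a final interval has at most one further linear part in between, giving at most three pieces. If $d_x=d_y=d$, then $\operatorname{lcm}=\max=d$, so the two intervals share the endpoint $1/(d+1)$ and there are at most two pieces. We expect the main obstacle to be the small-$\alpha$ interval. Making the granularity argument rigorous requires a careful integrality bound on the dual vertex values, and we would first try it via the $(1-\alpha)/\operatorname{lcm}$-lattice structure of an optimal transport plan.
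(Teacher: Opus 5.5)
Your first step (finitely many dual vertices, so $W_1$ is convex and piecewise affine) is fine. The closing count, however, is false. A concave piecewise linear function that is linear on an initial and on a final interval can have arbitrarily many kinks in between, and $\bigl(\frac{1}{\operatorname{lcm}(d_x,d_y)+1},\frac{1}{\max(d_x,d_y)+1}\bigr)$ can be long. The bound of three pieces has to come from integrality, and you nearly have it. The vertices of $P$ are integer-valued, because the constraints form a totally unimodular difference system with integer right-hand side (cf.\ Lemma~\ref{lem:3.3}). Hence $j:=f(x)-f(y)\in\{-1,0,1\}$. Your pairing equals $\alpha j+(1-\alpha)F(f)$ with $F(f):=\sum_w f(w)\bigl(m_x^0(w)-m_y^0(w)\bigr)$, so grouping by $j$ gives $W_1(m_x^\alpha,m_y^\alpha)=\max_{j\in\{-1,0,1\}}\{\alpha j+(1-\alpha)a_j\}$. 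Here $a_j$ is the maximum of $F$ over integer-valued $1$-Lipschitz $f$ with $f(x)-f(y)=j$. This is the route the paper takes, following Bourne et al., in the proof of Theorem~\ref{theorem:piecewise linear property for hypergraphs}.

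On the right interval, the whole content is that for every $\alpha\in\bigl(\frac{1}{d_x+1},1\bigr)$ some optimal potential has $f(x)-f(y)=1$ (with your sign convention; you wrote $f(y)-f(x)=1$). You assert this rather than prove it. Your primal picture is also off: for $d_x>d_y$ and $\alpha<\frac{1}{d_y+1}$ one has $m_x^\alpha(x)<m_y^\alpha(x)$, so $x$ has no excess. The condition $\alpha\ge\frac{1-\alpha}{d_x}$ says instead that $y$ has a deficit. The missing mechanism is mass balance plus complementary slackness:
\begin{enumerate}[(1)]
\item Take an optimal plan with $\pi(w,w)=m_x^\alpha(w)$ whenever $m_x^\alpha(w)\le m_y^\alpha(w)$ (Lemma~\ref{lem:4.1}). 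Since $d_x\ge d_y$, this applies at every common neighbour of $x$ and $y$, and at $y$ when $\alpha>\frac{1}{d_x+1}$.
\item So the deficit of $y$ is supplied by $x$ or by some $z$ with $d(z,y)=2$.
\item Lemma~\ref{lem:3.1} gives $f(x)-f(y)=1$ directly in the first case, and via $2=f(z)-f(y)\le f(x)+1-f(y)$ in the second.
\end{enumerate}
Lemma~\ref{lem:4.2} with $\alpha_2=1$ and continuity then finish.

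On the small-$\alpha$ interval, granularity as you sketch it only reaches $\frac{1}{2L+1}$, where $L:=\operatorname{lcm}(d_x,d_y)$. Write $t=\alpha/(1-\alpha)$. Then $W_1/(1-\alpha)=\max_j(a_j+jt)$, the target is linearity for $t\in[0,\frac1L]$, and $a_j\in\frac1L\mathbb Z$. You must first choose, among the potentials optimal at $\alpha=0$, one with the largest $j$; otherwise a tied potential with larger $j$ overtakes immediately. The critical case is then $j=-1$ being the unique maximiser. The competitor $j=1$ has relative slope $2$, not $1$, so $a_{-1}-a_1\ge\frac1L$ only excludes a breakpoint before $t=\frac{1}{2L}$, i.e.\ before $\alpha=\frac{1}{2L+1}$. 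Your claim that slopes are bounded by $1$ plus a similar term is exactly where the argument breaks.

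An extra structural inequality is needed, for instance $2a_0\ge a_{-1}+a_1$. If $f_{\pm1}$ attain $a_{\pm1}$, then $h=\frac12(f_{-1}+f_1)$ has $h(x)=h(y)$. The functions $\lfloor h\rfloor$ and $\lceil h\rceil$ are integer-valued and $1$-Lipschitz (because $d$ is integer-valued), have $j=0$, and satisfy $F(\lfloor h\rfloor)+F(\lceil h\rceil)=F(f_{-1})+F(f_1)$. Hence $a_{-1}-a_1\ge 2(a_{-1}-a_0)\ge\frac2L$. In every case the chosen line then stays maximal on $t\in[0,\frac1L]$, i.e.\ on $\alpha\in[0,\frac{1}{L+1}]$. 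The paper does not reprove this interval; it imports Theorem~\ref{theorem:piecewise linear property for graphs} from Bourne et al.
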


Hypergraphs encode higher-order interactions that cannot, in general, be represented faithfully by an unweighted graph. This has led to several inequivalent extensions of discrete curvature, including multi-marginal transport curvature \cite{AsoodehGaoEvans2018}, curvature for directed hypergraphs \cite{EidiJost2020}, nonlinear-Laplacian approaches \cite{IkedaEtAl2022}, and flexible random-walk frameworks \cite{CoupetteEtAl2023}. The present article concerns the vertex-pair curvature introduced by Tian and Zhao, whose one-step law first chooses an incident hyperedge and then chooses another vertex inside that hyperedge. Notably, they also established the concavity of the associated idleness function in \cite{TianZhao2025}.

For $r$-uniform linear hypergraphs, we identify the hypergraph walk with the simple random walk on its 2-section (see the definition in Section \ref{sec:preliminary}). The linear intervals then follow directly, and with explicit attribution, from the graph theorem of \cite{BourneEtAl2018}.

\begin{theorem}\label{theorem:piecewise linear property for linear hypergraphs} Let $\mathcal H=(V,E)$ be a locally finite $r$-uniform linear hypergraph. Let $x, y\in V$ with $x\sim_{\mathcal H}y$. Then the function $\alpha\mapsto\kappa_{\alpha}^{\mathcal H}(x, y)$ is concave and piecewise linear over $[0,1]$ with at most 3 linear parts. Furthermore, we have $\kappa_{\alpha}^{\mathcal H}(x, y)$ is linear on the intervals $$\left[0,\frac{1}{\operatorname{lcm}\left(d_x^{\mathcal H}, d_y^{\mathcal H}\right)(r-1)+1}\right]\text{ and }\left[\frac{1}{\max\left(d_x^{\mathcal H}, d_y^{\mathcal H}\right)(r-1)+1}, 1\right].$$
Thus, if we have the further condition $d_{x}^{\mathcal H}=d_{y}^{\mathcal H}$, then $\kappa_{\alpha}^{\mathcal H}(x, y)$ has at most two linear parts.

\end{theorem}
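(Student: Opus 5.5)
The plan is to reduce the statement to Theorem~\ref{theorem:piecewise linear property for graphs} by identifying the hypergraph walk with the simple random walk on the 2-section. Let $G=[\mathcal H]_2$ be the 2-section: it has vertex set $V$, and $u\sim_G v$ exactly when $u\neq v$ lie in a common hyperedge.

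\textbf{Step 1 (the 2-section is a simple graph with controlled degrees).} Since $\mathcal H$ is linear, two distinct hyperedges share at most one vertex. Hence for a fixed vertex $x$, the sets $e\setminus\{x\}$ over hyperedges $e\ni x$ are pairwise disjoint. By $r$-uniformity each of these sets has exactly $r-1$ elements. So every $G$-neighbour of $x$ lies in exactly one hyperedge with $x$, and
\[
d_x^G=d_x^{\mathcal H}(r-1).
\]
In particular $G$ is locally finite, and $x\sim_{\mathcal H}y$ is equivalent to $x\sim_G y$.

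\textbf{Step 2 (the walks and metrics agree).} In the Tian--Zhao walk with idleness $\alpha$, the walker stays at $x$ with probability $\alpha$. Otherwise it picks an incident hyperedge uniformly, with probability $1/d_x^{\mathcal H}$, and then another vertex of that hyperedge uniformly, with probability $1/(r-1)$. By Step 1, each neighbour $z$ is reached through a unique hyperedge. Therefore
\[
m_x^\alpha(z)=\frac{1-\alpha}{d_x^{\mathcal H}(r-1)}=\frac{1-\alpha}{d_x^G},
\]
which is the lazy simple random walk on $G$. The hyperpath distance counts the fewest hyperedges in a chain joining two vertices. This equals the graph distance in $G$, since consecutive vertices of a $G$-path share a hyperedge and conversely. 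So the two $W_1$ problems coincide, and $\kappa^{\mathcal H}_\alpha(x,y)=\kappa_\alpha(x,y)$ computed in $G$ for all $\alpha\in[0,1]$.

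\textbf{Step 3 (transfer).} Apply Theorem~\ref{theorem:piecewise linear property for graphs} to $G$ and substitute $d_x^G=d_x^{\mathcal H}(r-1)$. This gives concavity and at most three linear pieces. It also gives linearity on $[1/(\max(d_x^{\mathcal H},d_y^{\mathcal H})(r-1)+1),1]$. For the left interval we use
\[
\operatorname{lcm}\bigl(a(r-1),b(r-1)\bigr)=\operatorname{lcm}(a,b)(r-1).
\]
If $d_x^{\mathcal H}=d_y^{\mathcal H}$, then $\operatorname{lcm}=\max$, so the two intervals overlap at a common breakpoint. Only the middle region can then carry extra kinks, and the graph theorem already yields at most two pieces.

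The main obstacle is Step 2: making sure the paper's precise definitions of the walk and the hyperpath metric match the graph ones exactly. In particular, the conventions for self-choice inside a hyperedge and the normalization by $|e|-1$ must match, and linearity is used essentially to prevent double-counting a neighbour reached through two hyperedges.
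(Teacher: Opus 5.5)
Your proposal is correct and follows the paper's proof essentially step for step: you identify $\mu_x^\alpha$ with the lazy simple random walk on $[\mathcal H]_2$ using $d_x=d_x^{\mathcal H}(r-1)$ and the equality of the hyperpath and graph metrics, and then invoke the graph theorem of Bourne et al. Your explicit use of $\operatorname{lcm}\bigl(a(r-1),b(r-1)\bigr)=\operatorname{lcm}(a,b)(r-1)$ fills in a step the paper leaves implicit. One small correction: in the equal-degree case there is no ``middle region'' at all, because the two intervals share the endpoint $1/(d_x^{\mathcal H}(r-1)+1)$ and together cover $[0,1]$.
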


\begin{example}\label{ex:nonuniform}
Below we present the hypergraph $\mathcal H_0$, together with its associated idleness function:
\begin{figure}[H]
    \begin{minipage}{6cm}
    \centering
    \includegraphics[height=3.7cm,width=3.8cm]{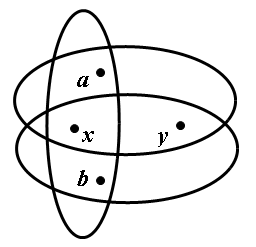}
    \caption{$\mathcal H_0$}
    \label{fig:H_0}
    \end{minipage}
    \begin{minipage}{9cm}
    \centering
    \includegraphics[height=4.2cm,width=5.7cm]{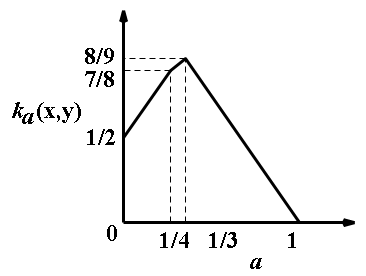}
    \caption{function $\alpha\mapsto\kappa_{\alpha}^{\mathcal H_0}(x, y)$}
    \label{fig:function}
    \end{minipage}
\end{figure}

Note that, under the assumption \(d_x^{\mathcal H} \ge d_y^{\mathcal H}\), the left endpoint of the linear interval containing \(\alpha = 1\) in Theorem~\ref{theorem:piecewise linear property for linear hypergraphs} is recovered by setting \(\mu_x^\alpha(y) = \mu_y^\alpha(y)\). For a general $r$-uniform hypergraph $\mathcal H=(V,E)$, this interval takes the form
\[
\left[
\frac{\sum_{\substack{e\in E\\e \ni \{x,y\}}} 1}
     {(d_x^{\mathcal H})(r-1) + \sum_{\substack{e\in E\\e \ni \{x,y\}}} 1},
\; 1
\right];
\]
for instance, the hypergraph \(\mathcal{H}_0\) in Figure~\ref{fig:H_0} yields \([1/4, 1]\). Yet the function depicted in Figure~\ref{fig:function} fails to be monotone on \([1/4, 1]\).

\end{example}
The passage from graphs to hypergraphs is not merely a change of notation. If the hyperedges incident to a vertex have different cardinalities, the resulting transition probabilities on neighboring vertices are nonuniform. Moreover, nonlinearity further undermines the uniformity of the probability distribution. Consequently, the general hypergraph walk is usually not the simple random walk on its 2-section, and graph formulas expressed only through graph degrees cannot be transferred directly. For a general locally finite hypergraph, we have:
\begin{theorem}\label{theorem:piecewise linear property for hypergraphs} Let $\mathcal H=(V,E)$ be a locally finite hypergraph. Let $x, y\in V$ with $x\sim_{\mathcal H} y$. Then the function $\alpha\mapsto\kappa_{\alpha}^{\mathcal H}(x, y)$ is concave and piecewise linear over $[0,1]$ with at most 3 linear parts. Furthermore, we have $\kappa_{\alpha}^{\mathcal H}(x, y)$ is linear on the interval $$\left[\frac{1}{2}, 1\right].$$ 
\end{theorem}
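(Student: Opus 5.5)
Throughout, write $\mu_x^\alpha=\alpha\delta_x+(1-\alpha)\mu_x$, where $\mu_x(z)=\frac{1}{d_x^{\mathcal H}}\sum_{e\ni x,z}\frac{1}{|e|-1}$ for $z\neq x$ is the Tian--Zhao one-step law. Then $\kappa_\alpha^{\mathcal H}(x,y)=1-W_1(\mu_x^\alpha,\mu_y^\alpha)$, with $W_1$ taken for the hyperpath metric. I will follow the strategy of \cite{BourneEtAl2018}, using the Kantorovich duality
\[
W_1(\mu_x^\alpha,\mu_y^\alpha)=\sup_{f\in\mathrm{Lip}(1)}\sum_z f(z)\bigl(\mu_x^\alpha(z)-\mu_y^\alpha(z)\bigr).
\]
Both measures are supported in the finite set $B=B_1(x)\cup B_1(y)$, so I may restrict to $1$-Lipschitz functions on $B$ normalized by $f(x)=0$. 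The right-hand side is affine in $\alpha$ for each fixed $f$. Modulo constants, the set of such $f$ is a compact polytope, so $W_1$ is a maximum of finitely many affine functions (over its extreme points), hence convex and piecewise affine. Concavity of $\kappa_\alpha$ follows, as already noted by Tian and Zhao.

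To get the ``at most three pieces'' statement, I will prove separately that $\kappa_\alpha$ is affine on $[1/2,1]$, and that it is affine on some interval $[0,\alpha_0]$ with $\alpha_0>0$. The second claim is automatic from the finite-max structure: near $\alpha=0$ a single extreme point attains the maximum. That alone does not bound the number of pieces in $[\alpha_0,1/2]$, though. So I will try, as in \cite{BourneEtAl2018}, to show that every optimal $f$ can be taken integer-valued with values in $\{-1,0,1,2\}$ on the relevant support. I will then show that as $\alpha$ varies on $(0,1/2)$ the relevant quantity is a convex function determined by only two competing affine functionals. Concretely, I would compare the optimal plans at $\alpha=0$ and at $\alpha=1/2$ and show that their slopes can change only once in between, giving pieces $[0,\alpha_1]$, $[\alpha_1,1/2]$ and $[1/2,1]$. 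This middle step, bounding the number of breakpoints in $(0,1/2)$ by one, is the one I expect to be the main obstacle. I would first try to adapt the Bourne et al.\ argument, which uses integrality of optimal Kantorovich potentials together with the fact that the $\alpha$-dependence enters only through the masses at $x$ and $y$.

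For linearity on $[1/2,1]$, I will use a mass-balance transport argument. For $\alpha\ge 1/2$ one has $\mu_x^\alpha(x)=\alpha\ge 1/2\ge 1-\alpha\ge\mu_y^\alpha(x)$, and similarly at $y$. Moreover, the mass that $\mu_y^\alpha$ puts on $x$, namely $(1-\alpha)\mu_y(x)$, is at most $\alpha$. I will write
\[
\mu_x^\alpha-\mu_y^\alpha=\bigl(\mu_x^{1/2}-\mu_y^{1/2}\bigr)\cdot 2(1-\alpha)+(2\alpha-1)(\delta_x-\delta_y).
\]
I will then construct an optimal plan for $\alpha=1/2$ and an optimal potential $f$ that is simultaneously optimal for $\alpha=1$, where $f(x)-f(y)=1$ is forced. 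The key claim is that some optimal $f$ for $\alpha=1/2$ satisfies $f(x)-f(y)=1$. To see this, take an optimal $f$ at $\alpha=1/2$ and modify it on $\{x,y\}$ by setting $f(x)$ maximal and $f(y)$ minimal subject to the Lipschitz constraint. At $\alpha=1/2$ the coefficient of $f(x)$ in the pairing is $\tfrac12-\tfrac12\mu_y(x)\ge 0$, and the coefficient of $f(y)$ is $\tfrac12\mu_x(y)-\tfrac12\le 0$, so this modification does not decrease the objective. One must check that $f(x)-f(y)=1$ can then be achieved. If the modification leaves $f(x)-f(y)<1$, a neighboring constraint is tight, and I will shift $f$ on a level set to fix this. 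Given such an $f$, $W_1$ restricted to $[1/2,1]$ is attained by one $f$ at both endpoints, so by convexity it is affine there, and hence $\kappa_\alpha$ is affine on $[1/2,1]$.
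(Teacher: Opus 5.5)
There is a genuine gap in the main claim, the bound of three affine pieces. You leave it open, and the route you sketch cannot work, because its intermediate claim is false. You want the slope to change at most once in $(0,1/2)$, giving pieces $[0,\alpha_1]$, $[\alpha_1,1/2]$, $[1/2,1]$. Take the graph (a simple $2$-uniform hypergraph) with edges $xy$, $xa_1$, $xa_2$, $yb$, $a_1b$, $a_1a_2$. A direct computation gives $W_1(\mu_x^\alpha,\mu_y^\alpha)=1-2\alpha$ on $[0,1/7]$, $\frac56(1-\alpha)$ on $[1/7,1/4]$ and $\frac{1+\alpha}{2}$ on $[1/4,1]$. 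Both breakpoints lie in $(0,1/2)$, and $[1/2,1]$ is not a piece of its own.

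So the count must be done globally, and your extreme-point argument only yields finitely many pieces. The missing idea is the paper's. Normalize $f(y)=0$ and use an integer-valued optimal potential (Lemma~\ref{lem:3.3}); since $d_{\mathcal H}(x,y)=1$, this forces $f(x)=j\in\{-1,0,1\}$. In the pairing, $\alpha$ enters the coefficient of $f(x)$ as $\alpha-(1-\alpha)\mu_y(x)$, while every coefficient at $w\neq x,y$ is $(1-\alpha)(\mu_x(w)-\mu_y(w))$. Hence for fixed $j$ the supremum over the remaining values is $(1-\alpha)C_j$, with $C_j$ independent of $\alpha$, and
\[
W_1(\mu_x^\alpha,\mu_y^\alpha)=\max_{j\in\{-1,0,1\}}\left(j\left(\alpha-(1-\alpha)\mu_y(x)\right)+(1-\alpha)C_j\right).
\]
This is a maximum of three affine functions on all of $[0,1]$, which gives concavity of $\kappa_\alpha^{\mathcal H}(x,y)$ and at most three pieces at once. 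In the example above these three functions are exactly the three pieces listed.

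Your treatment of $[1/2,1]$ is right in outline. Once a single potential is optimal at both $\alpha=1/2$ and $\alpha=1$, your decomposition plus convexity is exactly Lemma~\ref{lem:4.2}. But you do not establish that such a potential exists. Raising $f(x)$ and lowering $f(y)$ reaches $f(x)-f(y)=1$ only if the $xy$ constraint is the binding one. Otherwise a level-set shift changes the objective by the shift times the net $(\mu_x^{1/2}-\mu_y^{1/2})$-mass of the shifted set, and you give no reason why that is nonnegative.

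The claim needs a global argument, which the paper makes on the primal side for $\alpha>1/2$:
\begin{enumerate}[(1)]
\item At least $\alpha-\mu_y^\alpha(x)$ of the mass at $x$ must leave $x$. If $\pi(x,y)=0$, it must land in $V\setminus\{x,y\}$, whose $\mu_y^\alpha$-mass is $1-\alpha-\mu_y^\alpha(x)$; this forces $\alpha\le 1/2$.
\item So every optimal plan has $\pi(x,y)>0$. By Lemma~\ref{lem:3.1}, every optimal potential then satisfies $f(x)-f(y)=1$, and is therefore also optimal at $\alpha=1$.
\item Lemma~\ref{lem:4.2} gives linearity on $[\alpha_0,1]$ for each $\alpha_0>1/2$, and continuity extends it to $[1/2,1]$.
\end{enumerate}
You wrote down the relevant mass inequalities but used them only for sign information, not to force $\pi(x,y)>0$.
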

As an immediate consequence of Theorems \ref{theorem:piecewise linear property for linear hypergraphs} and \ref{theorem:piecewise linear property for hypergraphs}, we obtain the following Crollary:
\begin{corollary}
    Let $\mathcal H=(V, E)$ be a locally finite hypergraph. Let $x, y\in V$ with $x\sim_{\mathcal H} y$. Then we have
    \[\kappa_{LLY}^{\mathcal H}(x, y)=\frac{\kappa_\alpha^{\mathcal H}(x,y)}{1-\alpha},\ \alpha\in\left[\frac{1}{2}, 1\right].\]
If $\mathcal H$ is $r$-uniform linear, then 
    \[\kappa_{LLY}^{\mathcal H}(x, y)=\frac{\kappa_\alpha^{\mathcal H}(x,y)}{1-\alpha},\ \alpha\in\left[\frac{1}{\max\left(d_x^{\mathcal H}, d_y^{\mathcal H}\right)(r-1)+1}, 1\right].\]
\end{corollary}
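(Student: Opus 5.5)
The corollary follows from combining the linearity intervals with the definition of the Lin--Lu--Yau curvature,
\[
\kappa_{LLY}^{\mathcal H}(x,y)=\lim_{\alpha\to 1^-}\frac{\kappa_\alpha^{\mathcal H}(x,y)}{1-\alpha}.
\]
We also use the trivial endpoint value $\kappa_1^{\mathcal H}(x,y)=0$. At $\alpha=1$ both measures $\mu_x^1=\delta_x$ and $\mu_y^1=\delta_y$ are Dirac masses, so $W_1(\mu_x^1,\mu_y^1)=d(x,y)=1$ for adjacent vertices in the hyperpath metric, and hence $\kappa_1^{\mathcal H}(x,y)=1-1=0$.

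First, suppose $\alpha\mapsto\kappa_\alpha^{\mathcal H}(x,y)$ is affine on an interval $[a,1]$. Then there are constants $A,B$ with $\kappa_\alpha^{\mathcal H}(x,y)=A\alpha+B$ for $\alpha\in[a,1]$. The condition $\kappa_1^{\mathcal H}(x,y)=0$ forces $B=-A$, so $\kappa_\alpha^{\mathcal H}(x,y)=A(\alpha-1)$. Hence for every $\alpha\in[a,1)$,
\[
\frac{\kappa_\alpha^{\mathcal H}(x,y)}{1-\alpha}=-A.
\]
This quotient is constant on $[a,1)$, so its limit as $\alpha\to1^-$ exists and equals $-A$. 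By definition that limit is $\kappa_{LLY}^{\mathcal H}(x,y)$, which gives the identity for every $\alpha\in[a,1)$.

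Second, we choose $a$. For a general locally finite hypergraph, Theorem~\ref{theorem:piecewise linear property for hypergraphs} provides linearity on $[1/2,1]$, so $a=1/2$. For an $r$-uniform linear hypergraph, Theorem~\ref{theorem:piecewise linear property for linear hypergraphs} provides linearity on $\bigl[\frac{1}{\max(d_x^{\mathcal H},d_y^{\mathcal H})(r-1)+1},1\bigr]$, so $a$ is that left endpoint.

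The only point needing care is $\alpha=1$ itself, where the quotient is $0/0$. There the stated formula is understood in the limiting sense, or equivalently the range is read as $[a,1)$. The argument has no real obstacle, since all the substance lies in the two theorems being invoked.
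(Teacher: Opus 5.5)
Your proof is correct and is exactly the argument the paper leaves implicit when it calls the corollary an immediate consequence of Theorems~\ref{theorem:piecewise linear property for linear hypergraphs} and~\ref{theorem:piecewise linear property for hypergraphs}: affinity on $[a,1]$ together with $\kappa_1^{\mathcal H}(x,y)=0$ makes $\kappa_\alpha^{\mathcal H}(x,y)/(1-\alpha)$ constant on $[a,1)$, and hence equal to its limit as $\alpha\to1^-$. Your remark that the identity must be read on $[a,1)$, since the quotient is $0/0$ at $\alpha=1$, is a fair correction to the statement as written.
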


The paper is arranged as follows. Section~\ref{sec:preliminary} defines the basic concepts and notation. Section~\ref{sec:transport} records the transport lemmas needed later.  Section~\ref{sec:proof} proves the main results.


\section{Preliminaries}\label{sec:preliminary}
Throughout, $\mathcal{H}=(V,E)$ is a locally finite, simple hypergraph.
Simplicity means that $E$ is an antichain under $\subseteq$ (i.e., for distinct
$e,f\in E$, neither $e\subseteq f$ nor $f\subseteq e$) and that $|e|\ge 2$
for every $e\in E$. Local finiteness means that each vertex belongs to only
finitely many hyperedges. Moreover, each hyperedge is finite. We write
\[
    d_x^{\mathcal H}:=\#\{e\in E:x\in e\}
\]
for the \emph{incidence degree} of $x$. Two distinct vertices are adjacent, written $x\sim_{\mathcal H}y$, if they lie in a common hyperedge. A \emph{hyperpath} joining $u,v\in V$ is a finite sequence
$\gamma=(e_1,\dots,e_l)$ of distinct hyperedges such that
$u\in e_1$, $v\in e_l$, and $e_j\cap e_{j+1}\neq\varnothing$
for $1\le j\le l-1$.
The \emph{distance} $d_{\mathcal H}(u,v)$ is the minimum number of hyperedges
in such a hyperpath, i.e.
\[
d_{\mathcal H}(u,v):=\inf_\gamma |\gamma|,
\]
where the infimum runs over all hyperpaths $\gamma$ from $u$ to $v$;
if none exists, set $d_{\mathcal H}(u,v)=\infty$, and $d_{\mathcal H}(u,u)=0$. For any vertex $x\in V$, let $B_1(x)=\{z\in V: d_{\mathcal H}(x,z)\leq1\}$.

The \emph{2-section} of $\mathcal{H}$, denoted by $[{\mathcal{H}}]_2$,
is the graph with vertex set $V$ in which two distinct vertices
$u,v\in V$ are adjacent if and only if $u\sim_{\mathcal H}v$. Thus, $[{\mathcal{H}}]_2$ is a simple graph. In $[{\mathcal{H}}]_2$, let $d_u$, $u\sim v$, and $d(u,v)$ denote the vertex degree, adjacency relation, and the graph distance, respectively. Note that $d(u,v)=d_{\mathcal{H}}(u,v)$ for any two vertices $u,v\in V$.

We first recall the \emph{Wasserstein distance} in the hypergraph.

\begin{definition}[\textbf{Wasserstein distance}]
Let $\mathcal H=(V,E)$ be a locally finite hypergraph, $\mu_1$ and $\mu_2$ be two finitely supported probability measures on $V$. Their $1$-Wasserstein distance is
\begin{equation}\label{eq:W_inf}
W_1^{\mathcal H}(\mu_1,\mu_2)
:=\inf_{\pi\in\Pi(\mu_1,\mu_2)}
  \sum_{u\in V}\sum_{v\in V} d_{\mathcal H}(u,v)\,\pi(u,v),
\end{equation}
where $\Pi(\mu_1,\mu_2)$ is the set of all maps
$\pi:V\times V\to[0,1]$ satisfying
\[
\left\{\begin{array}{l}\sum_{w\in V}\pi(u, w)=\mu_1(u),\\ \sum_{w\in V}\pi(w, v)=\mu_2(v),\end{array}\right.
\]
and any such $\pi$ is called a \emph{transport plan}. We call $\pi$ an optimal transport plan if $\pi$ attains the infimum in \eqref{eq:W_inf}.
\end{definition}

A key consequence of Kantorovich duality (\cite[Theorem~1.14]{Villani2003}) is an alternative formula for the Wasserstein distance:
\begin{equation}\label{eq:W_sup}
W_1^{\mathcal H}(\mu_1,\mu_2)
:=\sup_{f\in \emph{1-Lip(V)}} \sum_{u\in V}f(u)\left(\mu_1(u)-\mu_2(u)\right),
\end{equation}
where $f$ is \emph{1-Lip(V)} if $|f(x)-f
(y)|\leq d_{\mathcal H}(x,y)$ for all $x,y\in V$. We call $f$ an optimal Kantorovich potential transporting $\mu_1$ to $\mu_2$ if $f$ attains the supremum in \eqref{eq:W_sup}.

Given $x\in V$, we introduce a distinguished probability measure $\mu_x^\alpha$ with idleness parameter $\alpha\in [0,1]$:
\begin{equation}\label{eq:h_mu}
\mu_{x}^{\alpha}(z):=\left\{\begin{array}{ll}\alpha,&\text{ if } z=x;\\ \frac{1-\alpha}{d_x^{\mathcal H}}\sum_{\substack{e^{\prime}\in E\\e^{\prime}\ni\{x,z\}}}\frac{1}{\left|e^{\prime}\right|-1},&\text{ if } z\sim_{\mathcal H} x;\\ 0,&\text{ otherwise,}\end{array}\right.
\end{equation}
where $\left|e^{\prime}\right|$ denotes the cardinality of $e^{\prime}$.
\begin{definition}[\textbf{$\alpha$-Ollivier-Ricci curvature} and \textbf{Lin--Lu--Yau curvature} \cite{TianZhao2025}]\label{def:ollivier_and_lly}
Let $\mathcal H=(V,E)$ be a locally finite hypergraph. The \emph{$\alpha$-Ollivier-Ricci curvature} for any two vertices $x,y\in V$ is defined as
\begin{equation}\label{eq:ollivier}
    \kappa_{\alpha}^{\mathcal H}(x,y)
    :=1-\frac{W_1^{\mathcal H}(\mu_x^{\alpha},\mu_y^{\alpha})}{d_{\mathcal H}(x,y)}.
\end{equation}
The \emph{Lin--Lu--Yau curvature} of $x$ and $y$ is defined as
\begin{equation}\label{eq:lly}
    \kappa_{LLY}^{\mathcal H}(x,y)
    :=\lim_{\alpha\uparrow1}
      \frac{\kappa_{\alpha}^{\mathcal H}(x,y)}{1-\alpha}.
\end{equation}
\end{definition}

\section{Transport lemmas}\label{sec:transport}
In this section we record several lemmas needed in Section~\ref{sec:proof}.
These are optimal transport results on discrete metric spaces and are direct extensions of \cite[ Lemmas~3.1, 3.3, 4.1, 4.2]{BourneEtAl2018}.
Their proofs carry over unchanged to hypergraphs and are omitted for brevity.

\begin{lemma}[see Lemma 3.1 in \cite{BourneEtAl2018}]\label{lem:3.1}
Let $\mathcal H=(V, E)$ be a locally finite hypergraph. Let $x, y\in V$ with $x\sim_{\mathcal H} y$. Let $\alpha\in[0,1]$. Let $\pi$ and $f$ be an optimal transport plan and an optimal Kantorovich potential transporting $\mu_{x}^{\alpha}$ to $\mu_{y}^{\alpha}$, respectively. Let $u, v\in V$ with $\pi(u, v)\neq 0$. Then
$$f(u)-f(v)=d(u, v).$$
\end{lemma}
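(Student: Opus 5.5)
The argument is complementary slackness for the Kantorovich duality, made explicit. I will show that for any transport plan $\pi\in\Pi(\mu_x^\alpha,\mu_y^\alpha)$ and any $1$-Lipschitz $f$, both optimal, the cost of $\pi$ equals the dual value of $f$. Since each term $d(u,v)-(f(u)-f(v))$ is nonnegative, every term in the support of $\pi$ must then vanish. Only two facts are needed: (a) both measures are finitely supported, because $\mathcal H$ is locally finite and each hyperedge is finite, so $B_1(x)\cup B_1(y)$ is a finite set and all sums below are finite; and (b) strong duality, i.e.\ the infimum in \eqref{eq:W_inf} coincides with the supremum in \eqref{eq:W_sup}, as recorded via \cite[Theorem~1.14]{Villani2003}.

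Concretely, let $\pi$ be optimal and $f$ an optimal Kantorovich potential. By the marginal constraints on $\pi$,
\[
\sum_{u,v}\bigl(f(u)-f(v)\bigr)\pi(u,v)=\sum_u f(u)\mu_x^\alpha(u)-\sum_v f(v)\mu_y^\alpha(v)=\sum_u f(u)\bigl(\mu_x^\alpha(u)-\mu_y^\alpha(u)\bigr).
\]
By optimality of $f$ and duality, the right-hand side equals $W_1^{\mathcal H}(\mu_x^\alpha,\mu_y^\alpha)$. By optimality of $\pi$, this in turn equals $\sum_{u,v}d_{\mathcal H}(u,v)\pi(u,v)$. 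Subtracting gives
\[
\sum_{u,v}\bigl(d_{\mathcal H}(u,v)-f(u)+f(v)\bigr)\pi(u,v)=0 .
\]
Each summand is a product of two nonnegative factors: the bracket is nonnegative because $f$ is $1$-Lipschitz, and $\pi\ge 0$. Hence every summand is zero, and whenever $\pi(u,v)\neq0$ we get $f(u)-f(v)=d_{\mathcal H}(u,v)=d(u,v)$, the last equality being the identification of hyperpath distance with $2$-section distance from Section~\ref{sec:preliminary}.

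The only delicate point is justifying the exchange of sums and that all quantities are finite. Since $\pi$ vanishes outside $\operatorname{supp}\mu_x^\alpha\times\operatorname{supp}\mu_y^\alpha$, a finite set, all sums are finite. Distances between these support points are at most $3$, since $x\sim_{\mathcal H}y$, so no infinite distances occur. Only the values of $f$ on the finite support matter. The hypothesis $x\sim_{\mathcal H}y$ and the specific form \eqref{eq:h_mu} of the measures play no role beyond guaranteeing finite support and finite distances.
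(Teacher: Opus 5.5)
Your proof is correct and is essentially the complementary-slackness argument of Lemma~3.1 in \cite{BourneEtAl2018}, which the paper invokes by reference, saying the proof carries over unchanged: the marginal constraints and strong duality equate the cost of $\pi$ with the dual value of $f$, and then $f(u)-f(v)\le d(u,v)$ together with $\pi\ge 0$ forces equality on the support of $\pi$. Your remarks on finite support, distances bounded by $3$, and $d=d_{\mathcal H}$ are exactly what makes that transfer to hypergraphs legitimate.
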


\begin{lemma}[see Lemma 3.3 in \cite{BourneEtAl2018}]\label{lem:3.3} Let $\mathcal H=(V, E)$ be a locally finite hypergraph. Let $x, y\in V$ with $x\sim_{\mathcal H} y$. Let $\alpha\in[0,1]$. Then there exists $f\in 1$-$Lip(V)$ such that
$$ W_1^{\mathcal H}\left(\mu_x^\alpha,\mu_y^{\alpha}\right)=\sum_{w\in V}f(w)\left(\mu_x^{\alpha}(w)-\mu_y^\alpha(w)\right),$$
and $f(w)\in \mathbb{Z}$ for all $w\in V$.
\end{lemma}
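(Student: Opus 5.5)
The plan is to reduce the statement to a finite linear program whose constraint matrix is totally unimodular, and then to extend the integral solution from the support to all of $V$.

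\emph{Step 1: restrict to the support.} Let $S:=B_1(x)\cup B_1(y)$. This set is finite by local finiteness, and it contains the supports of both $\mu_x^\alpha$ and $\mu_y^\alpha$. Since $x\sim_{\mathcal H}y$, any two points of $S$ are at $d_{\mathcal H}$-distance at most $3$, and in particular at finite distance. The objective $\sum_{w}f(w)(\mu_x^\alpha(w)-\mu_y^\alpha(w))$ depends only on $f|_S$. Every $1$-Lipschitz function on $(S,d_{\mathcal H})$ extends to a $1$-Lipschitz function on $V$ via the McShane extension $\tilde f(w):=\min_{s\in S}\bigl(f(s)+d_{\mathcal H}(s,w)\bigr)$. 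Hence the supremum in \eqref{eq:W_sup} equals the supremum over $1$-Lipschitz functions on $S$. Distances to $S$ may be infinite for vertices in other components; in that case we simply define the extension componentwise, for instance as $0$ off the component containing $S$.

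\emph{Step 2: finite LP and total unimodularity.} The objective is invariant under adding a constant, because $\sum_w(\mu_x^\alpha-\mu_y^\alpha)(w)=0$. We may therefore normalize $f(x)=0$. The problem on $S$ then becomes the LP: maximize $\sum_{s\in S}c_s f(s)$ subject to $f(u)-f(v)\le d_{\mathcal H}(u,v)$ for all ordered pairs $u,v\in S$, and $f(x)=0$. The feasible region is a nonempty polytope: it contains $f\equiv0$, and it is bounded since $|f(s)|\le 3$. So the maximum is attained at a vertex.

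The constraint matrix has rows of the form $e_u-e_v$, together with the row $e_x$. This is the transpose of a directed incidence matrix augmented by a unit row, so it is totally unimodular. The right-hand sides $d_{\mathcal H}(u,v)$ and $0$ are integers. Hence every vertex, in particular an optimal one, is integral.

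\emph{Step 3: extend.} Take the integral optimum $f$ on $S$ and apply the McShane extension from Step 1. Because $f$ and $d_{\mathcal H}$ are integer-valued, the extension is integer-valued. It is $1$-Lipschitz on $V$ and agrees with $f$ on $S$. By Step 1 it attains $W_1^{\mathcal H}(\mu_x^\alpha,\mu_y^\alpha)$, which gives the claim.

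The only delicate point is Step 2, namely justifying total unimodularity of the system with the normalization row. If one prefers to avoid citing this fact, a direct alternative is available. Take any real optimal $f$ on $S$ and consider $g_t:=\lfloor f-t\rfloor$ for $t\in[0,1)$. Each $g_t$ is still $1$-Lipschitz, since floors preserve the inequality $f(u)-f(v)\le d$ when $d$ is an integer. Moreover, averaging $g_t$ over $t\in[0,1)$ gives $f$ up to an additive constant. By linearity, some $g_t$ then has objective value at least the optimum, and this $g_t$ is the desired integral optimizer.
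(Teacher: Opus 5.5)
Your proof is correct, but it takes a different route from the one the paper relies on. The paper gives no argument of its own; it only claims that \cite[Lemma~3.3]{BourneEtAl2018} transfers verbatim. The argument that fits the paper's toolkit, and that the placement of Lemma~\ref{lem:3.1} points to, is a deterministic rounding on all of $V$. Take an optimal plan $\pi$ and a real optimal potential $\varphi$. Then $\lfloor\varphi\rfloor$ is $1$-Lipschitz because $d_{\mathcal H}$ is integer-valued. Whenever $\pi(u,v)>0$, Lemma~\ref{lem:3.1} gives $\varphi(u)-\varphi(v)=d_{\mathcal H}(u,v)\in\mathbb{Z}$, so $\lfloor\varphi(u)\rfloor-\lfloor\varphi(v)\rfloor=d_{\mathcal H}(u,v)$. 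Therefore $\sum_w\lfloor\varphi(w)\rfloor(\mu_x^\alpha(w)-\mu_y^\alpha(w))=\sum_{u,v}\pi(u,v)\,d_{\mathcal H}(u,v)=W_1^{\mathcal H}(\mu_x^\alpha,\mu_y^\alpha)$.

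Your main route replaces complementary slackness with polyhedral integrality: a totally unimodular difference-constraint system on the finite set $S$, made pointed by fixing $f(x)=0$. You pay for this with the restriction/McShane-extension step. In exchange, your route needs no transport plan, and it proves explicitly, via compactness of the normalized polytope, that a maximizer exists; the rounding argument takes this for granted on the infinite set $V$. It also shows that every vertex of that polytope is integral. The rounding argument is shorter and says more about a given potential: the floor of \emph{every} optimal potential is optimal, and indeed so is $\lfloor\varphi-t\rfloor$ for every $t$.

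Your fallback sits in between. It uses the same floor observation, but it certifies optimality by averaging over shifts, using $\int_0^1\lfloor a-t\rfloor\,dt=a-1$, rather than through $\pi$. That is why it only delivers some (in fact almost every) $t$.
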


\begin{lemma}[see Lemma 4.1 in \cite{BourneEtAl2018}]\label{lem:4.1}
Let $\mathcal H=(V, E)$ be a locally finite hypergraph. Set $\mu_{1}$ and $\mu_{2}$ be probability measures on $V$. Then there exists an optimal transport plan $\pi$ transporting $\mu_{1}$ to $\mu_{2}$ with the following property: For all $x\in V$ with $\mu_{1}(x)\leq\mu_{2}(x)$ we have $\pi(x, x)=\mu_{1}(x)$.
\end{lemma}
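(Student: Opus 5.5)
We argue by an extremal choice among optimal plans, followed by a local rerouting that moves mass onto the diagonal. Since $\mu_1$ and $\mu_2$ are finitely supported, every transport plan is supported in the finite set $S\times S$, where $S=\operatorname{supp}\mu_1\cup\operatorname{supp}\mu_2$. Hence $\Pi(\mu_1,\mu_2)$ is a compact polytope in a finite-dimensional space. We assume $W_1^{\mathcal H}(\mu_1,\mu_2)<\infty$; this holds in our applications, where all supports lie in $B_1(x)\cup B_1(y)$ for adjacent $x,y$. Then the cost functional is linear and finite on this polytope. So the set $\Pi^\ast$ of optimal plans is a nonempty compact convex set. On $\Pi^\ast$, the continuous function
\[
D(\pi):=\sum_{w\in V}\pi(w,w)
\]
attains its maximum. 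We fix an optimal plan $\pi\in\Pi^\ast$ that maximizes $D$, and claim that it has the required property.

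Suppose, to the contrary, that some $x\in V$ satisfies $\mu_1(x)\le\mu_2(x)$ but $\pi(x,x)<\mu_1(x)$. We locate two off-diagonal entries as follows.
\begin{itemize}
\item The row sum at $x$ equals $\mu_1(x)>\pi(x,x)$, so there is $v\neq x$ with $\pi(x,v)>0$.
\item The column sum at $x$ equals $\mu_2(x)\ge\mu_1(x)>\pi(x,x)$, so there is $u\neq x$ with $\pi(u,x)>0$.
\end{itemize}
Set $\varepsilon:=\min\{\pi(x,v),\pi(u,x)\}>0$ and define a new plan $\pi'$ by
\[
\pi'(x,v)=\pi(x,v)-\varepsilon,\quad \pi'(u,x)=\pi(u,x)-\varepsilon,\quad \pi'(x,x)=\pi(x,x)+\varepsilon,\quad \pi'(u,v)=\pi(u,v)+\varepsilon,
\]
with all other entries unchanged. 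It is possible that $u=v$; in that case the last update raises a diagonal entry, which is harmless. One checks directly that all row and column sums are preserved and that all entries stay nonnegative, so $\pi'\in\Pi(\mu_1,\mu_2)$.

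Next we compare costs. The cost changes by
\[
\varepsilon\bigl(d_{\mathcal H}(u,v)+d_{\mathcal H}(x,x)-d_{\mathcal H}(x,v)-d_{\mathcal H}(u,x)\bigr)\le 0,
\]
by the triangle inequality for the hyperpath metric and $d_{\mathcal H}(x,x)=0$. Hence $\pi'$ is again optimal.

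Now we compare diagonals. The two entries that decreased, $(x,v)$ and $(u,x)$, are off-diagonal because $u,v\neq x$. Meanwhile $\pi'(x,x)$ increased by $\varepsilon$, and $\pi'(u,v)$ did not decrease. Therefore $D(\pi')\ge D(\pi)+\varepsilon>D(\pi)$. This contradicts the maximality of $D(\pi)$ on $\Pi^\ast$. Consequently $\pi(x,x)=\mu_1(x)$ for every $x$ with $\mu_1(x)\le\mu_2(x)$. Here we use that $\pi(x,x)\le\mu_1(x)$ always holds, since $\pi(x,x)$ is one term of the row sum at $x$.

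The main point requiring care is termination. A naive rerouting procedure, applied vertex by vertex, might a priori cycle or produce an infinite sequence of plans. Choosing a $D$-maximizer within the compact set of optimal plans sidesteps this issue entirely. The only other step needing attention is the verification that the rerouting never decreases a diagonal entry, which is exactly the observation that $u,v\neq x$.
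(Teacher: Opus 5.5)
Your proof is correct and is essentially the argument of Bourne et al.\ that the paper cites without reproducing. Rerouting the mass $u\to x\to v$ into $u\to v$ plus $x\to x$ preserves the marginals, does not increase the cost by the triangle inequality, and strictly increases the diagonal mass; fixing an optimal plan that maximizes $\sum_w\pi(w,w)$ settles termination cleanly, and the same swap shows this plan in fact satisfies $\pi(w,w)=\min\{\mu_1(w),\mu_2(w)\}$ for every $w$.
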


\begin{lemma}[see Lemma 4.2 in \cite{BourneEtAl2018}]\label{lem:4.2}
Let $\mathcal H=(V, E)$ be a locally finite hypergraph. Let $x, y\in V$ with $x\sim_{\mathcal H} y$. Let $0\leq \alpha_{1}\leq \alpha_{2}\leq 1$. If there exists a $1$-$Lip(V)$ function $f$ which is an optimal Kantorovich potential transporting $\mu_{x}^{\alpha_1}$ to $\mu_{y}^{\alpha_2}$ and transporting $\mu_{x}^{\alpha_{2}}$ to $\mu_{y}^{\alpha_{2}}$, then $W_{x y}^{\mathcal H}:[0,1]\rightarrow\mathbb{R}$, $W_{x y}^{\mathcal H}(\alpha)=W_{1}^{\mathcal H}\left(\mu_{x}^{\alpha},\mu_{y}^{\alpha}\right)$, is linear on $\left[\alpha_{1}, \alpha_{2}\right]$.
\end{lemma}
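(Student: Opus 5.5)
The proof will squeeze $W_{xy}^{\mathcal H}$ between an affine lower bound coming from the common potential $f$ and a chord upper bound coming from convexity. I read the hypothesis as saying that $f$ is an optimal Kantorovich potential for the pair $(\mu_x^{\alpha_1},\mu_y^{\alpha_1})$ and for the pair $(\mu_x^{\alpha_2},\mu_y^{\alpha_2})$. The written "$\mu_y^{\alpha_2}$" in the first pair appears to be a typo for $\mu_y^{\alpha_1}$.

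\textbf{Step 1: the measures are affine in $\alpha$.} From \eqref{eq:h_mu},
\[
\mu_x^\alpha=\alpha\,\delta_x+(1-\alpha)\,\mu_x^0 .
\]
Hence $\alpha\mapsto\mu_x^\alpha(w)-\mu_y^\alpha(w)$ is affine for every $w$. All these measures are supported in the finite set $B_1(x)\cup B_1(y)$. Define
\[
F(\alpha):=\sum_{w\in V}f(w)\bigl(\mu_x^\alpha(w)-\mu_y^\alpha(w)\bigr).
\]
This is a finite sum of affine functions, so $F$ is affine on $[0,1]$.

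\textbf{Step 2: affine lower bound.} Since $f\in 1$-$Lip(V)$, the duality formula \eqref{eq:W_sup} gives $W_{xy}^{\mathcal H}(\alpha)\ge F(\alpha)$ for every $\alpha$. By optimality of $f$ at the two endpoints, $F(\alpha_i)=W_{xy}^{\mathcal H}(\alpha_i)$ for $i=1,2$.

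\textbf{Step 3: convex upper bound.} Fix $\alpha=t\alpha_1+(1-t)\alpha_2$ with $t\in[0,1]$. Let $\pi_i$ be an optimal plan from $\mu_x^{\alpha_i}$ to $\mu_y^{\alpha_i}$. By Step 1,
\[
\mu_x^\alpha=t\mu_x^{\alpha_1}+(1-t)\mu_x^{\alpha_2},
\]
and likewise for $y$. So $t\pi_1+(1-t)\pi_2$ has the correct marginals and is a transport plan from $\mu_x^\alpha$ to $\mu_y^\alpha$. Its cost is $tW_{xy}^{\mathcal H}(\alpha_1)+(1-t)W_{xy}^{\mathcal H}(\alpha_2)$, which bounds $W_{xy}^{\mathcal H}(\alpha)$ from above.

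\textbf{Step 4: conclusion.} The right-hand side of Step 3 is the chord of $W_{xy}^{\mathcal H}$ through its values at $\alpha_1,\alpha_2$. By Step 2 these values agree with the affine function $F$ at both endpoints, so the chord equals $F(\alpha)$. Combining the two bounds gives
\[
F(\alpha)\le W_{xy}^{\mathcal H}(\alpha)\le F(\alpha),
\]
so $W_{xy}^{\mathcal H}=F$ is linear on $[\alpha_1,\alpha_2]$.

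\textbf{Main obstacle.} There is no real difficulty. The only points to check are reading the hypothesis correctly and noting that the sums are finite, which follows from local finiteness and the finiteness of hyperedges. Everything else is the affineness in Step 1, which holds for the hypergraph walk exactly as for graphs.
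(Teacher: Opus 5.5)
Your argument is correct and is essentially the proof the paper inherits from Bourne et al. The common potential $f$ gives an affine minorant $F\le W_{xy}^{\mathcal H}$ that is exact at $\alpha_1$ and $\alpha_2$, and convexity of $W_{xy}^{\mathcal H}$ (equivalently, concavity of $\alpha\mapsto\kappa_\alpha^{\mathcal H}(x,y)$, which you verify directly by mixing optimal plans via $\mu_x^\alpha=\alpha\delta_x+(1-\alpha)\mu_x^0$) forces $W_{xy}^{\mathcal H}=F$ in between.

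Your reading of $\mu_y^{\alpha_2}$ as a typo for $\mu_y^{\alpha_1}$ is right, and it is how the lemma is applied in the proof of Theorem~\ref{theorem:piecewise linear property for hypergraphs}. The literal version is in fact false: for a single hyperedge $\{x,y\}$ one has $W_{xy}^{\mathcal H}(\alpha)=|2\alpha-1|$, while $f$ with $f(x)=1$, $f(y)=0$ satisfies the literal hypothesis for $\alpha_1=0$, $\alpha_2=1$.
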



\section{Proofs of the main results}\label{sec:proof}

\begin{proof}[Proof of Theorem \ref{theorem:piecewise linear property for linear hypergraphs}]
Let $\mathcal H=(V,E)$ be a locally finite $r$-uniform linear hypergraph. Let $[\mathcal H]_2=(V,E^\prime)$ be the 2-section of $\mathcal H$. Then $[\mathcal H]_2$ is simple, and we have 
\[
d(u,v)=d_{\mathcal H}(u,v),\ \forall u,v\in V. 
\]
For any $x\in V$, and $\alpha\in [0,1]$, we define the following probability measures in $[{\mathcal H}]_2$:
\[
\tilde{\mu}_{x}^{\alpha}(z):=\left\{\begin{array}{ll}\alpha,&\text{ if } z=x;\\ \frac{1-\alpha}{d_x},&\text{ if } z\sim x;\\ 0,&\text{ otherwise.}\end{array}\right.
\]
Since $\mathcal H$ is $r$-uniform linear, by \eqref{eq:h_mu},we have
\[
\mu_{x}^{\alpha}(z):=\left\{\begin{array}{ll}\alpha,&\text{ if } z=x;\\ \frac{1-\alpha}{d_x^{\mathcal H}(r-1)},&\text{ if } z\sim_{\mathcal H} x;\\ 0,&\text{ otherwise,}\end{array}\right.
\]
and
\begin{align*}
    d_x&=\#\{z\in V: z\sim_{\mathcal H} x\}\\ 
    &=d_x^{\mathcal H}(r-1).
\end{align*}
Thus, 
\[
\tilde{\mu}_{x}^{\alpha}(z)=\mu_{x}^{\alpha}(z),\ \forall z\in V.
\]
Therefore, for any $x,y\in V$ with $x\sim_{\mathcal H} y$, and $\alpha\in [0,1]$, we have 

\begin{align*}
    \kappa_\alpha^{\mathcal H}(x,y)&=1-W_1^{\mathcal H}(\mu_x^\alpha,\mu_y^\alpha)\\ 
    &=1-\inf_{\pi\in\Pi(\mu_x^\alpha,\mu_y^\alpha)}
  \sum_{u\in V}\sum_{v\in V} d_{\mathcal H}(u,v)\,\pi(u,v)\\
  &=1-\inf_{\pi\in\Pi(\tilde{\mu}_x^\alpha,\tilde{\mu}_y^\alpha)}
  \sum_{u\in V}\sum_{v\in V} d(u,v)\,\pi(u,v)\\
  &=1-W_1(\tilde{\mu}_x^\alpha,\tilde{\mu}_y^\alpha)\\
  &=\kappa_\alpha(x,y),
\end{align*}
where $W_1(\mu_x^\alpha,\mu_y^\alpha)$ denotes the 1-Wasserstein distance between $\mu_x^\alpha$ and $\mu_y^\alpha$ in $[\mathcal H]_2$, see \cite[Dedinition 2.1]{BourneEtAl2018}.

The proof is now complete by Theorem~\ref{theorem:piecewise linear property for graphs}.
\end{proof}

Now we are prepared for the proof of Theorem \ref{theorem:piecewise linear property for hypergraphs}.
\begin{proof}[Proof of Theorem \ref{theorem:piecewise linear property for hypergraphs}]
Let $\mathcal H=(V,E)$ be a locally finite hypergraph. Let $x, y\in V$ with $x\sim_{\mathcal H} y$. We first prove that the function $\alpha\mapsto\kappa_{\alpha}(x, y)$ is piecewise linear over $[0,1]$ with at most 3 linear parts. The following argument is adapted from the proof of Bourne et al. \cite[Theorem 3.4]{BourneEtAl2018}. The essential point is that their proof depends only on the affine dependence of the idleness measures, the existence of integer-valued optimal Kantorovich potentials, and the condition $d_{\mathcal H}(x,y)=1$, rather than on the binary structure of graph edges. Then we have 
\begin{align*}
W_{1}^{\mathcal H}\left(\mu_{x}^{\alpha},\mu_{y}^{\alpha}\right)&=\sup_{f\in 1\text{-Lip(V)}}\sum_{w\in V}f(w)\left(\mu_{x}^{\alpha}(w)-\mu_{y}^{\alpha}(w)\right)\\
&=\sup_{\substack{f\in 1\text{-Lip(V)}\\ f: V\rightarrow\mathbb{Z}\\ f(y)=0}}\sum_{w\in V}^{\mathcal H}f(w)\left(\mu_{x}^{\alpha}(w)-\mu_{y}^{\alpha}(w)\right)\ \quad\text{(by Lemma \ref{lem:3.3})}\\
&=\max_{j\in\{-1,0,1\}}\left\{j\left(\alpha-\frac{1-\alpha}{d_{y}^{\mathcal H}}\sum_{\substack{e\in E\\e\ni\{x,y\}}}\frac{1}{|e|-1}\right)+\frac{1-\alpha}{d_{x}^{\mathcal H} d_{y}^{\mathcal H}}c_j\right\}\  \quad\text{(since $f(x)\in\{-1,0,1\}$)}\\
&=\max\left\{f_{-1}(\alpha), f_{0}(\alpha), f_{1}(\alpha)\right\},
\end{align*}
where $c_j$ and $f_j$, $j\in\{-1,0,1\}$, are defined by \[\begin{cases}
    \mathcal{A}_j&=\{f: V\rightarrow\mathbb{Z}|f(x)=j,f(y)=0,f\in 1\text{-Lip(V)}\},\\
    c_{j}&=\sup_{f\in\mathcal{A}_{j}} d_y^{\mathcal H}\left(\sum_{\substack{z\sim x\\ z\neq y}}f(z)\sum_{\substack{e\in E\\e\ni\{x,z\}}}\frac{1}{|e|-1}\right)-d_x^{\mathcal H}\left(\sum_{\substack{z\sim y\\ z\neq x}}f(z)\sum_{\substack{e\in E\\e\ni\{y,z\}}}\frac{1}{|e|-1}\right),\\
    f_j(\alpha)&=j\left(\alpha-\frac{1-\alpha}{d_y^{\mathcal H}}\sum_{\substack{e\in E\\e\ni\{x,y\}}}\frac{1}{|e|-1}\right)+\frac{1-\alpha}{d_x^{\mathcal H} d_y^{\mathcal H}} c_j.
\end{cases}\]
Therefore
$$\kappa_\alpha^{\mathcal H}(x, y)=1-\max\left\{f_{-1}(\alpha), f_{0}(\alpha), f_{1}(\alpha)\right\}.$$
Then $\alpha\longmapsto\kappa_\alpha^{\mathcal H}(x,y)$ is piecewise affine on $[0,1]$, with at most three affine pieces. 

Let $\pi$ be an optimal transport plan transporting $\mu_x^\alpha$ to $\mu_y^\alpha$ which satisfies the properties established in Lemma~\ref{lem:4.1}. Without loss of generality, Let $d_x^{\mathcal H}\geq d_y^{\mathcal H}$. We claim that $f(x)-f(y)=1$ for all $\alpha>\frac{1}{2}$. Indeed,
\[
\mu_x^\alpha(y)=\frac{1-\alpha}{d_x(r-1)}<\frac{1}{2}<\alpha=\mu_y^p(y),
\]
so there exists $z\in B_1(x)\setminus\{y\}$ with $\pi(z,y)>0$. If $z=x$, the claim follows from Lemma~\ref{lem:3.1}. If $z\neq x$, observe that
\[
\alpha-\frac{1-\alpha}{d_y(r-1)}=\sum_{v\in V\setminus\{x,y\}}\pi(x,v)\leq\sum_{v\in V\setminus\{x,y\}}\mu_y^\alpha(v)=1-\alpha-\frac{1-\alpha}{d_y(r-1)}.
\]
This yields $\alpha\leq\frac{1}{2}$, contradicting the hypothesis. Hence $f(x)-f(y)=1$ for all $\alpha>\frac{1}{2}$. Let $\frac{1}{2}<\alpha_0<1$, and let $f_*$ be an optimal Kantorovich potential transporting $\mu_x^{\alpha_0}$ to $\mu_y^{\alpha_0}$.
Then we have $f_*(x)-f_*(y)=1$. Note that any $1$-$Lip(V)$ function $g$ satisfying $g(x)-g(y)=1$ is an optimal Kantorovich potential transporting $\mu_x^1$ to $\mu_y^1$. Hence, by Lemma~\ref{lem:4.2}, $\alpha\mapsto\kappa_\alpha(x,y)$ is linear on $[\alpha_0,1]$. By continuity, this linearity persists on $[\tfrac{1}{2},1]$.
This completes the proof of Theorem~\ref{theorem:piecewise linear property for hypergraphs}.
\end{proof}


\section*{Acknowledgements}
During the preparation of this work, the author used ChatGPT (Plus subscription) for language polishing in the Abstract and Introduction, and for assistance in constructing Example~\ref{ex:nonuniform}. The author critically reviewed and verified all AI-generated content and takes full responsibility for the integrity and accuracy of this publication.


\end{document}